\documentclass[reqno,centertags, 12pt]{amsart}
\usepackage{amsmath,amsthm,amscd,amssymb}
\usepackage{latexsym}
\usepackage{eucal}
\sloppy


\newcommand{\bbR}{{\mathbb{R}}}

\newcommand{\calH}{{\mathcal H}}
\newcommand{\calI}{{\mathcal I}}


\newcommand{\dott}{\,\cdot\,}

\newcommand{\lb}{\label}
\newcommand{\f}{\frac}

\newcommand{\ol}{\overline}

\newcommand{\tr}{\text{\rm{Tr}}}

\newcommand{\loc}{\text{\rm{loc}}}

\newcommand{\ess}{\text{\rm{ess}}}

\newcommand{\bi}{\bibitem}

\newcommand{\beq}{\begin{equation}}
\newcommand{\eeq}{\end{equation}}
\newcommand{\ba}{\begin{align}}
\newcommand{\ea}{\end{align}}





%
%
\newcounter{smalllist}

%
%





\allowdisplaybreaks
\numberwithin{equation}{section}

\newtheorem{theorem}{Theorem}[section]

\newtheorem*{t1}{Theorem 1}
\newtheorem*{t2}{Theorem 2}
\newtheorem*{t3}{Theorem 3}

\newtheorem*{p2.1}{Proposition 2.1}
\newtheorem{proposition}[theorem]{Proposition}

\theoremstyle{definition}

\theoremstyle{remark}
\newtheorem*{remark}{Remark}
\newtheorem*{remarks}{Remarks}

\newcommand{\abs}[1]{\lvert#1\rvert}
\newcommand{\jap}[1]{\langle #1 \rangle}
\newcommand{\norm}[1]{\lVert#1\rVert}

\begin{document}
\title[Schr\"odinger Operators with Purely Discrete Spectrum]
{Schr\"odinger Operators with\\Purely Discrete Spectrum}
\author[B.~Simon]{Barry Simon}

\dedicatory{Dedicated to A.~Ya.~Povzner}

\thanks{Mathematics 253-37, California Institute of Technology, Pasadena, CA 91125.
E-mail: bsimon@caltech.edu. Supported in part by NSF grant DMS-0652919 and by Grant No.\ 2006483
from the United States-Israel Binational Science Foundation (BSF), Jerusalem, Israel}
\thanks{To appear in the journal, {\it Methods of Functional Analysis and Topology}, volume in
memory of A.~Ya.~Povzner}

\date{October 13, 2008}
\keywords{compact resolvent, Schr\"odinger operators}
\subjclass[2000]{47B07, 35Q40, 47N50}

\begin{abstract} We prove $-\Delta +V$ has purely discrete spectrum if $V\geq 0$ and, for all $M$,
$\abs{\{x\mid V(x) <M\}}<\infty$ and various extensions.
\end{abstract}

\maketitle

\section{Introduction} \lb{s1}

Our main goal in this note is to explore one aspect of the study of Schr\"odinger operators
\begin{equation} \lb{1.1}
H=-\Delta + V
\end{equation}
which we'll suppose have $V$'s which are nonnegative and in $L_\loc^1(\bbR^\nu)$, in which case
(see, e.g., Simon \cite{S98}) $H$ can be defined as a form sum. We're interested here in criteria
under which $H$ has purely discrete spectrum, that is, $\sigma_\ess (H)$ is empty. This is
well known to be equivalent to proving $(H+1)^{-1}$ or $e^{-sH}$ for any (and so all) $s>0$ is
compact (see \cite[Thm.~XIII.16]{RS4}). One of the most celebrated elementary results on Schr\"odinger
operators is that this is true if
\begin{equation} \lb{1.2}
\lim_{\abs{x}\to\infty} V(x) =\infty
\end{equation}

But \eqref{1.2} is not necessary. Simple examples where \eqref{1.2} fails but $H$ still has compact
resolvent were noted first by Rellich \cite{Rel}---one of the most celebrated examples is in $\nu=2$,
$x=(x_1,x_2)$, and
\begin{equation} \lb{1.3}
V(x_1,x_2) =x_1^2 x_2^2
\end{equation}
where \eqref{1.2} fails in a neighborhood of the axes. For proof of this and discussions of eigenvalue
asymptotics, see \cite{Rob,S159,S158,Sol,Tam}.

There are known necessary and sufficient conditions on $V$ for discrete spectrum in terms of capacities
of certain sets (see, e.g., Maz'ya \cite{Maz}), but the criteria are not always so easy to check. Thus,
I was struck by the following simple and elegant theorem:

\begin{t1} Define
\begin{equation} \lb{1.4}
\Omega_M (V) =\{x\mid 0\leq V(x) <M\}
\end{equation}
If {\rm{(}}with $\abs{\dott}$ Lebesgue measure{\rm{)}}
\begin{equation} \lb{1.5}
\abs{\Omega_M(V)}<\infty
\end{equation}
for all $M$\!, then $H$ has purely discrete spectrum.
\end{t1}

I learned of this result from Wang--Wu \cite{WW}, but there is much related work. I found an elementary
proof of Theorem~1 and decided to write it up as a suitable tribute and appreciation of A.~Ya.~Povzner,
whose work on continuum eigenfunction expansions for Schr\"odinger operators in scattering situation
\cite{Pov55} was seminal and inspired  me as a graduate student forty years ago!

The proof has a natural abstraction:

\begin{t2} Let $\mu$ be a measure on a locally compact space, $X$ with $L^2 (X,d\mu)$ separable.
Let $L_0$ be a selfadjoint operator on $L^2(X,d\mu)$ so that its semigroup is ultracontractive
{\rm{(}}\cite{S173}{\rm{)}}: For some $s>0$, $e^{-sL_0}$ maps $L^2$ to $L^\infty (X,d\mu)$. Suppose
$V$ is a nonnegative multiplication operator so that
\begin{equation} \lb{1.6}
\mu(\{x\mid 0\leq V(x) <M\}) <\infty
\end{equation}
for all $M$\!. Then $L=L_0+V$ has purely discrete spectrum.
\end{t2}

\begin{remark} By $L_0+V$, we mean the operator obtained by applying the monotone convergence theorem for
forms (see, e.g., \cite{S82,S81}) to $L_0 + \min(V(x),k)$ as $k\to\infty$.
\end{remark}

The reader may have noticed that \eqref{1.3} does not obey Theorem~1 (but, e.g., $V(x_1, x_2)=x_1^2 x_2^4
+ x_1^4 x_2^2$ does). But out proof can be modified to a result that does include \eqref{1.3}. Given a
set $\Omega$ in $\bbR^\nu$, define for any $x$ and any $\ell >0$,
\begin{equation} \lb{1.7}
\omega_x^\ell (\Omega) = \abs{\Omega\cap \{y\mid \abs{y-x}\leq\ell\}}
\end{equation}
For example, for \eqref{1.3}, for $x\in\Omega_M$,
\begin{equation} \lb{1.8}
\omega_x^\ell (\Omega_M) \leq \f{C_\ell}{\abs{x}+1}
\end{equation}
We will say a set $\Omega$ is $r$-polynomially thin if
\begin{equation} \lb{1.9}
\int_{x\in\Omega} \omega_x^\ell (\Omega)^r\, d^\nu x <\infty
\end{equation}
for all $\ell$. For the example in \eqref{1.3}, $\Omega_M$ is $r$-polynomially thin for any $M$ and any
$r>0$. We'll prove

\begin{t3} Let $V$ be a nonnegative potential so that for any $M$\!, there is an $r>0$ so that $\Omega_M$
is $r$-polynomially thin. Then $H$ has purely discrete spectrum.
\end{t3}

As mentioned, this covers the example in \eqref{1.3}. It is not hard to see that if $P(x)$ is any
polynomial in $x_1, \dots, x_\nu$ so that for no $v\in\bbR^\nu$ is $\vec{v}\cdot\vec{\nabla} P\equiv 0$
(i.e., $P$ isn't a function of fewer than $\nu$ linear variables), then $V(x)=P(x)^2$ obeys the
hypotheses of Theorem~3.

In Section~\ref{s2}, we'll present a simple compactness criterion on which all theorems rely. In Section~\ref{s3},
we'll prove Theorems~1 and 2. In Section~4, we'll prove Theorem~3.

\medskip
It is a pleasure to thank Peter Stollmann for useful correspondence and Ehud de~Shalit for the hospitality
of Hebrew University where some of the work presented here was done.

\section{Segal's Lemma} \lb{s2}

Segal \cite{Seg} proved the following result, sometimes called Segal's lemma:

\begin{proposition} \lb{P2.1} For $A,B$ positive selfadjoint operators,
\begin{equation} \lb{2.1}
\norm{e^{-(A+B)}} \leq \norm{e^{-A} e^{-B}}
\end{equation}
\end{proposition}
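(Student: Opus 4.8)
The plan is to combine the Trotter product formula with one elementary operator inequality. First I would invoke the symmetric Trotter product formula, which for positive selfadjoint $A,B$ (interpreting $A+B$ as a form sum) gives
\[
e^{-(A+B)} = \slim_{n\to\infty} \bigl(e^{-B/2n} e^{-A/n} e^{-B/2n}\bigr)^{n}.
\]
Since the operator norm is lower semicontinuous under strong limits, this yields $\norm{e^{-(A+B)}} \le \liminf_n \norm{G_n^{\,n}}$, where $G_n := e^{-B/2n} e^{-A/n} e^{-B/2n}$. The point of the symmetric ordering is that $G_n$ is positive selfadjoint; indeed $G_n = W_n^* W_n$ with $W_n := e^{-A/2n} e^{-B/2n}$, so that $\norm{G_n^{\,n}} = \norm{G_n}^n = \norm{W_n}^{2n} = \norm{e^{-A/2n} e^{-B/2n}}^{2n}$. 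Thus everything reduces to bounding a power $\norm{e^{-A/m} e^{-B/m}}^{m}$ by $\norm{e^{-A} e^{-B}}$.

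The engine for that bound is the inequality, valid for any positive operators $P,Q$,
\[
\norm{PQ}^2 \le \norm{P^2 Q^2}.
\]
I would prove this by a spectral-radius argument: since $P,Q$ are selfadjoint, $\norm{PQ}^2 = \norm{(PQ)^*(PQ)} = \norm{Q P^2 Q}$, and $QP^2Q$ is positive selfadjoint, so its norm equals its spectral radius. Writing $QP^2Q = Q\,(P^2 Q)$ and $P^2 Q^2 = (P^2 Q)\,Q$, the nonzero spectra of $XY$ and $YX$ coincide, so the spectral radius of $QP^2Q$ equals that of $P^2Q^2$, which is at most $\norm{P^2Q^2}$. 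This gives $\norm{PQ}^2 \le \norm{P^2 Q^2}$.

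Applying this with $P = e^{-A/2^{k}}$ and $Q = e^{-B/2^{k}}$ gives the doubling bound $\norm{e^{-A/2^{k}} e^{-B/2^{k}}}^2 \le \norm{e^{-A/2^{k-1}} e^{-B/2^{k-1}}}$; iterating from $k$ down to $0$ yields $\norm{e^{-A/2^{k}} e^{-B/2^{k}}}^{2^{k}} \le \norm{e^{-A} e^{-B}}$ for every $k$. Taking $2n = 2^k$ in the reduction of the first paragraph, I conclude $\norm{e^{-(A+B)}} \le \liminf_k \norm{e^{-A/2^{k}} e^{-B/2^{k}}}^{2^{k}} \le \norm{e^{-A} e^{-B}}$, which is the claim.

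The soft ingredients—the Trotter formula and lower semicontinuity of the norm—are routine, and the reduction of $\norm{G_n^{\,n}}$ to a power of $\norm{W_n}$ is just the observation $G_n = W_n^* W_n$. The genuine content, and the step I expect to be the main obstacle, is the operator inequality $\norm{PQ}^2 \le \norm{P^2Q^2}$: the nontrivial idea is to pass through the positive selfadjoint operator $QP^2Q$, so that its norm equals its spectral radius, and then to exploit the coincidence of the nonzero spectra of $QP^2Q$ and $P^2Q^2$. A minor point to handle with care is the justification of the symmetric Trotter formula for form sums of unbounded $A,B$; here the fact that every operator in sight is a contraction makes the $\liminf$ manipulations legitimate.
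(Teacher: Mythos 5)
Your proposal is correct and is essentially the paper's own proof (Deift's argument): the heart in both cases is the identity $\sigma(XY)\setminus\{0\}=\sigma(YX)\setminus\{0\}$ applied to the positive selfadjoint operator $QP^2Q$ to get $\norm{PQ}^2\leq\norm{P^2Q^2}$, followed by the dyadic iteration down from $2^k$ and a Trotter-limit step. The only cosmetic difference is that you use the symmetric Trotter formula together with the factorization $G_n=W_n^*W_n$, whereas the paper uses the asymmetric formula plus submultiplicativity of the norm to bound $\norm{(e^{-A/2^n}e^{-B/2^n})^{2^n}}$.
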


\begin{remarks} 1. $A+B$ can always be defined as a closed quadratic form on $Q(A)\cap Q(B)$. That defines
$e^{-(A+B)}$ on $\ol{Q(A)\cap Q(B)}$ and we set it to $0$ on the orthogonal complement. Since the Trotter
product formula is known in this generality (see Kato \cite{Kato}), \eqref{2.1} holds in that generality.

\smallskip
2. Since $\norm{C^* C}=\norm{C}^2$, $\norm{e^{-A/2} e^{-B/2}}^2 = \norm{e^{-B/2} e^{-A} e^{-B/2}}$, and
since $\norm{e^{-(A+B)/2}}^2 = \norm{e^{-(A+B)}}$, \eqref{2.1} is equivalent to
\begin{equation} \lb{2.2}
\norm{e^{-A+B}} \leq \norm{e^{-B/2} e^{-A} e^{-B/2}}
\end{equation}
which is the way Segal \cite{Seg} stated it.

\smallskip
3. Somewhat earlier, Golden \cite{Gold} and Thompson \cite{Tho65} proved
\begin{equation} \lb{2.3}
\tr (e^{-(A+B)}) \leq \tr (e^{-A} e^{-B})
\end{equation}
and Thompson \cite{Tho71} later extended this to any symmetrically normed operator ideal.
\end{remarks}

\begin{proof} There are many; see, for example, Simon \cite{STI,Sr44}. Here is the simplest,
due to Deift \cite{Deift76,Deift78}: If $\sigma$ is the spectrum of an operator
\begin{equation} \lb{2.4}
\sigma(C\!D)\setminus\{0\}=\sigma(DC)\setminus\{0\}
\end{equation}
so with $\sigma_r$ the spectral radius,
\begin{equation} \lb{2.5}
\sigma_r(C\! D) = \sigma_r(DC)\leq \norm{DC}
\end{equation}
If $C\! D$ is selfadjoint, $\sigma_r (C\! D)=\norm{C\! D}$, so
\begin{equation} \lb{2.6}
C\! D\text{ selfadjoint} \Rightarrow \norm{C\! D}\leq \norm{DC}
\end{equation}

Thus,
\begin{equation} \lb{2.7}
\norm{e^{-A/2} e^{-B/2}}^2 = \norm{e^{-B/2} e^{-A} e^{-B/2}} \leq \norm{e^{-A} e^{-B}}
\end{equation}
By induction,
\begin{equation} \lb{2.8}
\norm{(e^{-A/2^n} e^{-B/2^n})^{2^n}} \leq \norm{e^{-A/2^n} e^{-B/2^n}}^{2n}
\leq \norm{e^{-A} e^{-B}}
\end{equation}
Take $n\to\infty$ and use the Trotter product formula to get \eqref{2.1}.
\end{proof}

In \cite{STI}, I noted that this implies for any symmetrically normed trace ideal, $\calI_\Phi$, that
\begin{equation} \lb{2.9}
e^{-A/2} e^{-B} e^{-A/2} \in \calI_\Phi \Rightarrow e^{-(A+B)}\in\calI_\Phi
\end{equation}
I explicitly excluded the case $\calI_\Phi = \calI_\infty$ (the compact operators) because the
argument there doesn't show that, but it is true---and the key to this paper!

Since $C\in\calI_\infty \Leftrightarrow C^* C\in\calI_\infty$ and $e^{-(A+B)}\in\calI_\infty$ if and
only if $e^{-\f12 (A+B)}\in\calI_\infty$, it doesn't matter if we use the symmetric form \eqref{2.2}
or the following asymmetric form which is more convenient in applications.

\begin{theorem}\lb{T2.2} Let $\calI_\infty$ be the ideal of compact operators on some Hilbert space,
$\calH$. Let $A,B$ be nonnegative selfadjoint operators. Then
\begin{equation} \lb{2.10}
e^{-A} e^{-B} \in\calI_\infty \Rightarrow e^{-(A+B)}\in\calI_\infty
\end{equation}
\end{theorem}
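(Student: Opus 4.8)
The plan is to deduce compactness of $e^{-(A+B)}$ from a \emph{log-majorization} of singular values, which I obtain by feeding Proposition~\ref{P2.1} through antisymmetric tensor powers, and then to run a short elementary argument showing that log-majorization transfers decay of the singular values. Throughout, write $\mu_j(T)$ for the $j$-th $s$-number (singular value) of a bounded $T$, listed decreasingly; compactness of $T$ is exactly $\mu_j(T)\to 0$, and for the positive selfadjoint operator $e^{-(A+B)}$ the $\mu_j$ are just its eigenvalues. Thus the theorem is equivalent to the implication $\mu_j(e^{-A}e^{-B})\to 0 \Rightarrow \mu_j(e^{-(A+B)})\to 0$.

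First I would establish, for every $k$, the inequality
\[
\prod_{j=1}^k \mu_j(e^{-(A+B)}) \le \prod_{j=1}^k \mu_j(e^{-A}e^{-B}).
\]
To get this, work on the $k$-fold antisymmetric tensor power $\Lambda^k(\calH)$. For each $C\geq 0$ selfadjoint, the maps $\Lambda^k(e^{-tC})$ form a strongly continuous semigroup of positive selfadjoint contractions, whose generator is a positive selfadjoint operator $C^{(k)}$ on $\Lambda^k(\calH)$; moreover $\Lambda^k$ is multiplicative, $\Lambda^k(ST)=\Lambda^k(S)\Lambda^k(T)$, and $(A+B)^{(k)}=A^{(k)}+B^{(k)}$ as form sums. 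Applying Proposition~\ref{P2.1} on $\Lambda^k(\calH)$ with $A^{(k)},B^{(k)}$ in place of $A,B$, then using these identities, gives
\[
\norm{\Lambda^k(e^{-(A+B)})} = \norm{e^{-(A^{(k)}+B^{(k)})}} \le \norm{e^{-A^{(k)}}e^{-B^{(k)}}} = \norm{\Lambda^k(e^{-A}e^{-B})}.
\]
Since $\norm{\Lambda^k(T)}=\prod_{j=1}^k\mu_j(T)$, this is exactly the displayed log-majorization.

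Next I would argue elementarily that log-majorization forces $\mu_j(e^{-(A+B)})\to 0$. Put $\nu_j=\mu_j(e^{-(A+B)})$ and $\mu_j=\mu_j(e^{-A}e^{-B})$; both decrease, and $\mu_j\to 0$ by hypothesis. The decreasing sequence $\nu_j$ has a limit $c\geq 0$; suppose $c>0$. Choose $N$ with $\mu_j\le c/2$ for $j>N$; then for $k>N$ the log-majorization yields
\[
k\log c \le \sum_{j=1}^k \log\nu_j \le \sum_{j=1}^N\log\mu_j + (k-N)\log(c/2),
\]
which rearranges to $N\log c \le \sum_{j=1}^N\log\mu_j - (k-N)\log 2$. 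The right-hand side tends to $-\infty$ as $k\to\infty$ while the left-hand side is fixed, a contradiction. Hence $c=0$ and $e^{-(A+B)}$ is compact.

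The main obstacle is the functional-analytic bookkeeping in the middle step: rigorously constructing the induced positive selfadjoint generators $A^{(k)},B^{(k)}$ on $\Lambda^k(\calH)$ for form-sum-defined unbounded $A,B$, and verifying the intertwining identities, especially $(A+B)^{(k)}=A^{(k)}+B^{(k)}$ in the form sense, so that Proposition~\ref{P2.1} (in the form-sum generality noted in its first remark) applies verbatim on $\Lambda^k(\calH)$. Once these are in hand, the remaining inputs are the scalar identity $\norm{\Lambda^k T}=\prod_{j\le k}\mu_j(T)$ and the elementary limiting argument above; notice also that this route handles the asymmetric product $e^{-A}e^{-B}$ directly, so no reduction to a symmetric form is needed.
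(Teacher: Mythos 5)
Your proposal is correct and follows essentially the same route as the paper: Segal's lemma lifted to antisymmetric tensor powers gives $\norm{\wedge^k(e^{-(A+B)})}\le\norm{\wedge^k(e^{-A}e^{-B})}$, i.e.\ the log-majorization $\prod_{j\le k}\mu_j(e^{-(A+B)})\le\prod_{j\le k}\mu_j(e^{-A}e^{-B})$, which is exactly the paper's combination of \eqref{2.14} and \eqref{2.18}. The only (cosmetic) difference is the last elementary step: the paper takes $k$-th roots and uses that a decreasing sequence converges to the limit of its geometric means, whereas you derive $\mu_j(e^{-(A+B)})\to 0$ by a direct contradiction argument; the two are interchangeable.
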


\begin{proof} For any bounded operator, $C$, define $\mu_n(C)$ by
\begin{equation} \lb{2.11}
\mu_n(C) = \min_{\psi_1 \dots \psi_{n-1}} \,
\sup_{\substack{ \norm{\varphi}=1 \\ \varphi\perp\psi_1, \dots, \psi_{n-1}}} \norm{C\varphi}
\end{equation}
By the min-max principle (see \cite[Sect.~XIII.1]{RS4}),
\begin{equation} \lb{2.12}
\lim_{n\to\infty}\, \mu_n(C) =\sup (\sigma_\ess (\abs{C}))
\end{equation}
and $\mu_n(C)$ are the singular values if $C\in\calI_\infty$. In particular,
\begin{equation} \lb{2.13}
C\in\calI_\infty \Leftrightarrow \lim_{n\to\infty}\, \mu_n(C)=0
\end{equation}

Let $\wedge^\ell(\calH)$ be the antisymmetric tensor product (see \cite[Sects.~II.4, VIII.10]{RS1},
\cite[Sect.~XIII.17]{RS4}, and \cite[Sect.~1.5]{STI}). As usual (see \cite[eqn.~(1.14)]{STI}),
\begin{equation} \lb{2.14}
\norm{\wedge^m (C)} = \prod_{j=1}^m \mu_j(C)
\end{equation}
Since $\mu_1\geq\mu_2 \geq \cdots \geq 0$, we have
\begin{equation} \lb{2.15}
\lim_{n\to\infty}\, \mu_n(C) =\lim_{n\to\infty}\, (\mu_1(C) \dots \mu_n(C))^{1/n}
\end{equation}

\eqref{2.13}--\eqref{2.15} imply
\begin{equation} \lb{2.16}
C\in\calI_\infty \Leftrightarrow \lim_{n\to\infty}\, \norm{\wedge^n (C)}^{1/n} =0
\end{equation}

As usual, there is a selfadjoint operator, $d\wedge^n(A)$ on $\wedge^n(\calH)$ so
\begin{equation} \lb{2.17}
\wedge^n (e^{-tA})=e^{-t\, d\wedge^n(A)}
\end{equation}
so Segal's lemma implies that
\begin{align}
\norm{\wedge^n (e^{-(A+B)})} &\leq \norm{\wedge^n (e^{-A}) \wedge^n (e^{-B})} \notag \\
&= \norm{\wedge^n (e^{-A} e^{-B})} \lb{2.18}
\end{align}

Thus,
\begin{equation} \lb{2.19}
\lim_{n\to\infty}\, \norm{\wedge^n (e^{-(A+B)})}^{1/n} \leq \lim_{n\to\infty}\,
\norm{\wedge^n (e^{-A} e^{-B})}^{1/n}
\end{equation}

By \eqref{2.16}, we obtain \eqref{2.10}.
\end{proof}

\section{Proofs of Theorems~1 and 2} \lb{s3}

\begin{proof}[Proof of Theorem~1] By Theorem~\ref{T2.2}, we need only show $C=e^\Delta e^{-V}$ is
compact. Write
\begin{equation} \lb{3.1}
C=C_m +D_m
\end{equation}
where
\begin{equation} \lb{3.2}
C_m = C\chi_{\Omega_m} \qquad D_m =C\chi_{\Omega_m^c}
\end{equation}
with $\chi_S$ the operator of multiplication by the characteristic function of a set $S\subset\bbR^\nu$.
\[
\norm{e^{-V} \chi_{\Omega_m^c}}_\infty \leq e^{-m}
\]
and $\norm{e^\Delta}=1$, so
\begin{equation} \lb{3.3}
\norm{D_m} \leq e^{-m}
\end{equation}
and thus,
\begin{equation} \lb{3.4}
\lim_{m\to\infty}\, \norm{C-C_m} =0
\end{equation}

If we show each $C_m$ is compact, we are done. We know $e^\Delta$ has integral kernel $f(x-y)$ with
$f$ a Gaussian, so in $L^2$. Clearly, since $V$ is positive, $C_m$ has an integral kernel $C_m(x,y)$
dominated by
\begin{equation} \lb{3.5}
\abs{C_m(x,y)} \leq f(x-y) \chi_{\Omega_m}(y)
\end{equation}
Thus,
\[
\int \abs{C_m(x,y)}^2\, d^\nu x d^\nu y \leq \norm{f}^2_{L^2(\bbR^\nu)}
\norm{\chi_{\Omega_m}}_{L^2(\bbR_\nu)} <\infty
\]
since $\abs{\Omega_m} <\infty$. Thus, $C_m$ is Hilbert--Schmidt, so compact.
\end{proof}

\begin{proof}[Proof of Theorem~2] We can follow the proof of Theorem~1. It suffices to prove that
$e^{-sL_0} e^{-sV}$ is compact, and so, that $e^{-sL_0} \chi_{\Omega_m}$ is Hilbert--Schmidt.

That $e^{-sL_0}$ maps $L^2$ to $L^\infty$ implies, by the Dunford--Pettis theorem (see
\cite[Thm.~46.1]{Tre}), that there is, for each $x\in X$\!, a function $f_x(\dott)\in L^2 (X,d\mu)$
with
\begin{equation} \lb{3.6}
(e^{-sL_0} g)(x) =\jap{f_x,g}
\end{equation}
and
\begin{equation} \lb{3.7}
\sup_x \, \norm{f_x}_{L^2} =\norm{e^{-sL_0}}_{L^2\to L^\infty} \equiv C <\infty
\end{equation}

Thus, $e^{-sL_0}$ has an integral kernel $K(x,y)$ with
\begin{equation} \lb{3.8}
\sup_x \int \abs{K(x,y)}^2\, d\mu(y) = C<\infty
\end{equation}
(for $K(x,y)=f_x(y)$). But $e^{-sL_0}$ is selfadjoint, so its kernel is complex symmetric, so
\begin{equation} \lb{3.9}
\sup_y \int \abs{K(x,y)}^2\, d\mu(x) =C <\infty
\end{equation}
Thus,
\begin{equation} \lb{3.10}
\int \abs{K(x,y) \chi_{\Omega_m}(y)}^2 \, d\mu(x) d\mu(y)\leq C\mu (\Omega_m) <\infty
\end{equation}
and $e^{-sL_0}\chi_{\Omega_m}$ is Hilbert--Schmidt.
\end{proof}

\section{Proof of Theorem~3} \lb{s4}

As with the proof of Theorem~1, it suffices to prove that for each $M$\!, $e^\Delta \chi_{\Omega_M}$
is compact. $e^\Delta$ is convolution with an $L^1$ function, $f$. Let $Q_R$ be the characteristic
function of $\{x\mid\abs{x} <R\}$. Let $F_R$ be convolution with $fQ_R$. Then
\begin{equation} \lb{4.1}
\norm{e^\Delta -F_R} \leq \norm{f(1-Q_R)}_1 \to 0
\end{equation}
as $R\to\infty$, so
\begin{equation} \lb{4.2}
\norm{e^\Delta \chi_{\Omega_M} -F_R \chi_{\Omega_M}} \to 0
\end{equation}
and it suffices to prove for each $R,M$\!,
\begin{equation} \lb{4.3}
C_{M,R} =F_R \chi_{\Omega_M}
\end{equation}
is compact. Clearly, this works if we show for some $k$, $(C_{M,R}^* C_{M,R})^k$ is Hilbert--Schmidt.

Let $D$ be the operator with integral kernel
\begin{equation} \lb{4.4}
D(x,y) =\chi_{\Omega_M}(x) Q_{2R} (x-y) \chi_{\Omega_M}(y)
\end{equation}
Since $f$ is bounded, it is easy to see that
\begin{equation} \lb{4.5}
(C_{M,R}^* C_{M,R})(x,y) \leq cD(x,y)
\end{equation}
for some constant $c$, so it suffices to show $D^k$ is Hilbert--Schmidt.

$D^k$ has integral kernel
\begin{equation} \lb{4.6}
D^k (x,y) = \int D(x,x_1) D(x_1,x_2) \dots D(x_{k-1},y)\, dx_1 \dots, dx_{k-1}
\end{equation}

Fix $y$. This integral is zero unless $\abs{x-x_1} <2R, \dots \abs{x_{k-1}-y} <2R$, so, in particular,
unless $\abs{x-y}\leq 2kR$. Moreover, the integrand can certainly be restricted to the regions
$\abs{x_j -y}\leq 2kR$. Thus,
\begin{align}
D^k(x,y) &\leq Q_{2kR}(x-y) \biggl( \int_{\abs{x_j-y}\leq 2kR}\, \prod_{j=1}^{k-1} \chi_{\Omega_M} (x_j) \,
dx_1 \dots dx_{k-1}\biggr) \chi_{\Omega_m}(y) \lb{4.7} \\
&= Q_{2kR} (x-y) (\omega_y^{2kR}(\Omega_M)^{k-1}) \chi_{\Omega_M}(y) \lb{4.8}
\end{align}
by the definition of $\omega_x^\ell$ in \eqref{1.7}.

Thus,
\[
\int \abs{D^k(x,y)}^2\, d^\nu x d^\nu y \leq C(kR)^\nu \int_{x\in\Omega}
[\omega_x^{2kR} (\Omega_M)]^{2k-2}\, d^\nu x
\]
so if $2k-2 >r$ and \eqref{1.9} holds, $D^k$ is Hilbert--Schmidt.
\hfill \qed

\bigskip

\end{document}